\newtheorem{theorem}{Theorem}
{}
\newtheorem{corollary}{Corollary}
{}
\theoremstyle{plain}
{}
\begin{document}
\begin{center}
{\Large \bf{Boost Invariant Surfaces with Pointwise 1-Type Gauss Map in
Minkowski 4-Space E$_{1}^{4}$ }}
\end{center}
\centerline{\large Ferdag KAHRAMAN AKSOYAK  $^{1}$, Yusuf YAYLI $^{2}${\footnotetext{
{E-mail: $^{1}$ferda@erciyes.edu.tr(F. Kahraman Aksoyak ); $^{2}$yayli@science.ankara.edu.tr (Y.Yayli)}} }}

\

\centerline{\it $^{1}$Erciyes University, Department of Mathematics,
Kayseri, Turkey}
\centerline{\it $^{2}$Ankara University, Department of Mathematics,
Ankara, Turkey}

\begin{abstract}
In this paper, we study spacelike rotational surfaces which are called boost
invariant surfaces in Minkowski 4-space $\mathbb{E}_{1}^{4}$. We give
necessary and sufficient condition for flat spacelike rotational surface to
have pointwise 1-type Gauss map. Also, we obtain a characterization for
boost invariant marginally trapped surface with pointwise 1-type Gauss map.
\end{abstract}

\begin{quote}\small
{\it{Key words}: Rotation surface, Gauss map, Pointwise 1-type Gauss map , Marginally trapped surface, Minkowski space.}
\end{quote}
\begin{quote}\small
2000 \textit{Mathematics Subject Classification}: 53B25 ; 53C50 .
\end{quote}

\section{Introduction}

The notion of finite type mapping was introduced by B.Y. Chen in late
1970's. A pseudo- Riemannian submanifold $M$ of the $m-$dimensional
pseudo-Euclidean space $\mathbb{E}_{s}^{m}$ is said to be of finite type if
its position vector $x$ can be expressed as a finite sum of eigenvectors of
the Laplacian $\Delta $ of $M$, that is, $x=x_{0}+x_{1}+...+x_{k}$, where $%
x_{0}$ is a constant map, $x_{1},...,x_{k}$ are non-constant maps such that $%
\Delta x_{i}=\lambda _{i}x_{i},$ $\lambda _{i}\in $ $\mathbb{R}$, $%
i=1,2,...,k.$ If $\lambda _{1},\lambda _{2},$...,$\lambda _{k}$ are all
different, then $M$ is said to be of $k-$type. This notion of finite type
immersions is naturally extended to differentiable maps of $M$ in
particular, to Gauss maps of submanifolds \cite{chen1}.

If a submanifold $M$ of a Euclidean space or pseudo-Euclidean space has
1-type Gauss map $G$, then $G$ satisfies $\Delta G=\lambda \left( G+C\right)
$ for some $\lambda \in \mathbb{R}$ and some constant vector $C.$ Chen and
Piccinni made a general study on compact submanifolds of Euclidean spaces
with finite type Gauss map and they proved that a compact hypersurface $M$
of $\mathbb{E}^{n+1}$ has 1-type Gauss map if and only if $M$ is a
hypersphere in $\mathbb{E}^{n+1}$ \cite{chen1}$.$

Hovewer the Laplacian of the Gauss map of some typical well-known surfaces
such as\ a helicoid, a catenoid and right cone in 3-dimensional Euclidean
space $E^{3}$ and a helicoids of the 1st,2nd and 3rd kind, conjugate
Enneper's surface of the second kind \ and B-scrolls in 3- dimensional
Minkowski space $E_{1}^{3}$ take a somewhat different form namely,
\begin{equation}
\Delta G=f\left( G+C\right)
\end{equation}%
for some non-zero smooth function $f$ on $M$ and some constant vector $C.$
This equation is similar to an eigenvalue problem but the smooth function $f$
is not always constant. So a submanifold $M$ of a pseudo-Euclidean space $%
\mathbb{E}_{s}^{m}$ is said to have pointwise 1-type Gauss map if its Gauss
map satisfies $\left( 1\right) $ for some smooth function $f$ on $M$ and
some constant vector $C.$ A submanifold with pointwise 1-type Gauss map is
said to be of the first kind if the vector $C$ in $\left( 1\right) $ is
zero\ vector. Otherwise, the pointwise 1-type Gauss map is said to be of the
second kind.

Surfaces in Euclidean space and in pseudo-Euclidean space with pointwise
1-type Gauss map were recently studied in \cite{chen}, \cite{choi1}, \cite%
{choi2}, \cite{choi3}, \cite{dursun2}, \cite{dursun3}, \cite{dursun4}, \cite%
{dursun5}, \cite{kim2}, \cite{niang1}, \cite{niang2}. Also Dursun and Turgay
in \cite{dursun1} gave all general rotational surfaces in $\mathbb{E}^{4}$
with proper pointwise 1-type Gauss map of the first kind and classified
minimal rotational surfaces with proper pointwise 1-type Gauss map of the
second kind. Arslan et al. in \cite{arslan1} investigated rotational
embedded surface with pointwise 1-type Gauss map. Arslan at el. in \cite%
{arslan2} gave necessary and sufficent conditions for Vranceanu rotation
surface to have pointwise 1-type Gauss map. Yoon in \cite{yoon2} showed that
flat Vranceanu rotation surface with pointwise 1-type Gauss map is a
Clifford torus and in \cite{yoon1} studied rotation surfaces in the
4-dimensional Euclidean space with finite type Gauss map. Kim and Yoon in
\cite{kim1} obtained the complete classification theorems for the flat
rotation surfaces with finite type Gauss map and pointwise 1-type Gauss map.
The authors in \cite{ak1} studied flat general rotational surfaces in the 4-
dimensional Euclidean space $\mathbb{E}^{4}$ with pointwise 1-type Gauss map
and they showed that a non-planar flat general rotational surfaces with
pointwise 1-type Gauss map is a Lie group if and only if it is a Clifford
Torus. Also they gave a characterization for flat general rotation surfaces
with pointwise 1-type Gauss map in the 4- dimensional pseudo-Euclidean space
$\mathbb{E}_{2}^{4}$ \cite{ak2}.

On the other hand, trapped surfaces, introduced by Penrose in 1965, have a
fundamental role in the study of the singularity theorems in General
Relativity. If the mean curvature vector of a surface in $E_{1}^{4}$ is
timelike everywhere, \i t is called trapped surfaces; if the mean curvature
vector is always null (the mean curvature vector is proportional to one of
the null normals), the surface is called marginally trapped surface. Since
the mean curvature of such spacelike surface $H$ satisfy $\left \Vert
H\right \Vert =0,$ in mathematical literature these surfaces are called
quasi-minimal. In general relativity, marginally trapped surfaces are used
the study of the surfaces of black hole.

S.Haesen and M. Ortega in \cite{ortega1} and \cite{ortega2} classified
marginally trapped surfaces which are invariant under a spacelike rotations
and boost transformations in Minkowski 4-space. Also B. Y. Chen classify
marginally trapped Lorentzian flat surfaces and biharmonic surfaces in the
Pseudo Euclidean space $E_{2}^{4}$ \cite{chen2}$.$ Milousheva in \cite{milo}
studied marginally trapped surface with pointwise 1-type Gauss map in
Minkowski 4-space and proved that marginally trapped surface is of pointwise
1-type Gauss map if and only if it has parallel mean curvature vector field.

In this paper, we study spacelike surfaces which are invariant under boost
transformation (hyperbolic rotations) in Minkowski 4-space. We give a
characterization of flat spacelike rotational surface with pointwise 1-type
Gauss map. Also we obtain a characterization for boost invariant marginally
trapped surface with pointwise 1-type Gauss map and give an example of such
surfaces.

\section{Preliminaries}

Let $E_{s}^{m}$ be the $m-$dimensional pseudo-Euclidean space with signature
$(s,m-s)$. Then the metric tensor $g$ in $E_{s}^{m}$ has the form
\begin{equation*}
g=\sum \limits_{i=1}^{m-s}\left( dx_{i}\right) ^{2}-\sum
\limits_{i=m-s+1}^{m}\left( dx_{i}\right) ^{2}
\end{equation*}%
where $(x_{1},...,x_{m})$ is a standard rectangular coordinate system in $%
E_{s}^{m}.$

Let $M$ be an $n-$dimensional pseudo-Riemannian submanifold of a $m-$%
dimensional pseudo-Euclidean space $\mathbb{E}_{s}^{m}.$ We denote
Levi-Civita connections of $\mathbb{E}_{s}^{m}$ and $M$ by $\tilde{\nabla}$
and $\nabla ,$ respectively. Let $e_{1},$...,$e_{n},e_{n+1},$...,$e_{m}$ be
an adapted local orthonormal frame in $\mathbb{E}_{s}^{m}$ such that $e_{1},$%
...,$e_{n}$ are tangent to $M$\ and $e_{n+1},$...,$e_{m}$ normal to $M.$ We
use the following convention on the ranges of indices: $1\leq i,j,k,$...$%
\leq n$, $n+1\leq r,s,t,$...$\leq m$, $1\leq A,B,C,$...$\leq m.$

Let $\omega _{A}$ be the dual-1 form of $e_{A}$ defined by $\omega
_{A}\left( X\right) =\left \langle e_{A},X\right \rangle $ and $\varepsilon
_{A}=\left \langle e_{A},e_{A}\right \rangle =\pm 1.$ Also, the connection
forms $\omega _{AB}$ are defined by%
\begin{equation*}
de_{A}=\sum \limits_{B}\varepsilon _{B}\omega _{AB}e_{B},\text{ \  \ }\omega
_{AB}+\omega _{BA}=0
\end{equation*}%
Then we have
\begin{equation*}
\tilde{\nabla}_{e_{k}}^{e_{i}}=\sum \limits_{j=1}^{n}\varepsilon _{j}\omega
_{ij}\left( e_{k}\right) e_{j}+\sum \limits_{r=n+1}^{m}\varepsilon
_{r}h_{ik}^{r}e_{r}
\end{equation*}%
and%
\begin{equation}
\tilde{\nabla}_{e_{k}}^{e_{s}}=-\sum \limits_{j=1}^{n}\varepsilon
_{j}h_{kj}^{s}e_{j}+\sum \limits_{r=n+1}^{m}\varepsilon _{r}\omega
_{sr}\left( e_{k}\right) e_{r},\text{ \  \  \  \ }D_{e_{k}}^{e_{s}}=\sum%
\limits_{r=n+1}^{m}\omega _{sr}\left( e_{k}\right) e_{r},
\end{equation}%
where $D$ is the normal connection, $h_{ik}^{r}$ the coefficients of the
second fundamental form $h.$ The mean curvature vector $H$ of $M$ in $%
\mathbb{E}_{s}^{m}$ is defined by
\begin{equation*}
H=\frac{1}{n}\sum \limits_{s=n+1}^{m}\sum \limits_{i=1}^{n}\varepsilon
_{i}\varepsilon _{s}h_{ii}^{s}e_{s}
\end{equation*}%
and the Gaussian curvature $K$ of $M$ is given by
\begin{equation*}
K=\sum \limits_{s=n+1}^{m}\varepsilon _{s}\left(
h_{11}^{s}h_{22}^{s}-h_{12}^{s}h_{21}^{s}\right)
\end{equation*}%
Also normal curvature tensor $R^{D}$ of $M$ in $\mathbb{E}_{s}^{m=n+2}$ is
given by%
\begin{equation}
R^{D}(e_{j},e_{k};e_{r},e_{s})=\sum \limits_{i=1}^{n}\varepsilon _{i}\left(
h_{ik}^{r}h_{ij}^{s}-h_{ij}^{r}h_{ik}^{s}\right)
\end{equation}%
We recall that a surface $M$ in $\mathbb{E}_{1}^{4}$ is called extremal
surface if its mean curvature vector vanishes. If its Gaussian curvature
vanishes, the surface $M$ is called flat surface. If its normal curvature
tensor $R^{D}$ vanishes identically then a surface $M$ in $\mathbb{E}_{1}^{4}
$ is said to have flat normal bundle.

For any real function $f$ on $M$ the Laplacian $\Delta f$ of $f$ is given by
\begin{equation}
\Delta f=-\varepsilon _{i}\sum \limits_{i}\left( \tilde{\nabla}_{e_{i}}\tilde{\nabla}%
_{e_{i}}f-\tilde{\nabla}_{\nabla _{e_{i}}^{e_{i}}}f\right)
\end{equation}%
Let us now define the Gauss map $G$ of a submanifold $M$ into $G(n,m)$ in $%
\wedge ^{n}\mathbb{E}_{s}^{m},$ where $G(n,m)$ is the Grassmannian manifold
consisting of all oriented $n-$planes through the origin of $\mathbb{E}%
_{s}^{m}$ and $\wedge ^{n}\mathbb{E}_{s}^{m}$ is the vector space obtained
by the exterior product of $n$ vectors in $\mathbb{E}_{s}^{m}.$ Let $%
e_{i_{1}}\wedge ...\wedge e_{i_{n}}$ and $f_{j_{1}}\wedge ...\wedge
f_{j_{n}} $be two vectors of $\wedge ^{n}\mathbb{E}_{s}^{m},$ where $%
\left
\{ e_{1},\text{...,}e_{m}\right \} $ and $\left \{ f_{1},\text{...,}%
f_{m}\right \} $ are orthonormal bases of $\mathbb{E}_{s}^{m}$. Define an
indefinite inner product $\left \langle ,\right \rangle $ on $\wedge ^{n}%
\mathbb{E}_{s}^{m}$ by%
\begin{equation*}
\left \langle e_{i_{1}}\wedge ...\wedge e_{i_{n}},f_{j_{1}}\wedge ...\wedge
f_{j_{n}}\right \rangle =\det \left( \left \langle e_{i_{l}},f_{j_{k}}\right
\rangle \right) .
\end{equation*}%
Therefore, for some positive integer $t,$ we may identify $\wedge ^{n}%
\mathbb{E}_{s}^{m}$ with some Euclidean space $\mathbb{E}_{t}^{N}$ where $%
N=\left(
\begin{array}{c}
m \\
n%
\end{array}%
\right) .$ The map $G:M\rightarrow G(n,m)\subset E_{t}^{N}$ defined by $%
G(p)=\left( e_{1}\wedge ...\wedge e_{n}\right) \left( p\right) $ is called
the Gauss map of $M,$ that is, a smooth map which carries a point $p$ in $M$
into the oriented $n-$plane in $\mathbb{E}_{s}^{m}$ obtained from parallel
translation of the tangent space of $M$ at $p$ in $\mathbb{E}_{s}^{m}.$

\section{Boost Invariant Surfaces with Pointwise 1-Type Gauss Map in $%
E_{1}^{4}$}

In this section, we consider spacelike surfaces in the Minkowski space $%
E_{1}^{4}$ which are invariant under the following subgroup of direct,
linear isometries of $E_{1}^{4}$:
\begin{equation*}
G=\left \{
\begin{pmatrix}
\cos t & -\sin t & 0 & 0 \\
\sin t & \cos t & 0 & 0 \\
0 & 0 & 1 & 0 \\
0 & 0 & 0 & 1%
\end{pmatrix}%
:t\in \mathbb{R}\right \} ,
\end{equation*}%
well-known as boost isometries.%
\begin{equation*}
\varphi \left( t,s\right) =%
\begin{pmatrix}
\cos t & -\sin t & 0 & 0 \\
\sin t & \cos t & 0 & 0 \\
0 & 0 & 1 & 0 \\
0 & 0 & 0 & 1%
\end{pmatrix}%
\left(
\begin{array}{c}
\alpha _{1}(s) \\
0 \\
\alpha _{3}(s) \\
\alpha _{4}(s)%
\end{array}%
\right)
\end{equation*}%
\begin{equation}
M:\text{ }\varphi \left( t,s\right) =\left( \alpha _{1}(s)\cosh t,\alpha
_{1}(s)\sinh t,\alpha _{3}(s),\alpha _{4}(s)\right)
\end{equation}%
where the profile curve of $M$ is unit speed spacelike curve, that is, $%
-\left( \alpha _{1}^{\prime }(s)\right) ^{2}+\left( \alpha _{3}^{\prime
}(s)\right) ^{2}+\left( \alpha _{4}^{\prime }(s)\right) ^{2}=1.$ We choose a
moving frame $e_{1},e_{2},e_{3},e_{4}$ such that $e_{1},e_{2}$ are tangent
to $M$ and $e_{3},e_{4}$ are normal to $M$ which are given by the following:
\begin{eqnarray*}
e_{1} &=&\left( \alpha _{1}^{\prime }(s)\cosh t,\alpha _{1}^{\prime
}(s)\sinh t,\alpha _{3}^{\prime }\left( s\right) ,\alpha _{4}^{\prime
}\left( s\right) \right) \\
e_{2} &=&\left( \sinh t,\cosh t,0,0\right) \\
e_{3} &=&\frac{1}{\sqrt{1+\left( \alpha _{1}^{\prime }(s)\right) ^{2}}}%
((1+\left( \alpha _{1}^{\prime }(s)\right) ^{2})\cosh t,(1+\left( \alpha
_{1}^{\prime }(s)\right) ^{2})\sinh t \\
&&,\alpha _{1}^{\prime }(s)\alpha _{3}^{\prime }(s),\alpha _{1}^{\prime
}(s)\alpha _{4}^{\prime }(s) \\
e_{4} &=&\frac{1}{\sqrt{1+\left( \alpha _{1}^{\prime }(s)\right) ^{2}}}%
\left( 0,0,-\alpha _{4}^{\prime }(s),\alpha _{3}^{\prime }(s)\right)
\end{eqnarray*}%
Then it is easily seen that
\begin{equation*}
\left \langle e_{1},e_{1}\right \rangle =\left \langle e_{2},e_{2}\right
\rangle =\left \langle e_{4},e_{4}\right \rangle =1,\text{ }\left \langle
e_{3},e_{3}\right \rangle =-1
\end{equation*}%
we have the dual 1-forms as:
\begin{equation}
\omega _{1}=ds\text{ \  \  \  \ and \  \  \  \ }\omega _{2}=\alpha _{1}(s)dt
\end{equation}%
By a direct computation we have components of the second fundamental form
and the connection forms as:%
\begin{eqnarray}
h_{11}^{3} &=&-c(s),\ h_{12}^{3}=0,\ h_{22}^{3}=-b(s) \\
h_{11}^{4} &=&d(s),\text{ \ }h_{12}^{4}=0,\text{ \ }h_{22}^{4}=0  \notag
\end{eqnarray}%
\begin{eqnarray}
\omega _{12} &=&a(s)b(s)\omega _{2},\text{ \  \ }\omega _{13}=-c(s)\omega
_{1},\text{ \  \ }\omega _{14}=d(s)\omega _{1} \\
\omega _{23} &=&-b(s)\omega _{2},\text{ \  \ }\omega _{24}=0,\text{ \  \ }%
\omega _{34}=a(s)d(s)\omega _{1}  \notag
\end{eqnarray}%
By covariant differentiation with respect to $e_{1}$ and $e_{2}$ a
straightforward calculation gives:
\begin{eqnarray}
\tilde{\nabla}_{e_{1}}e_{1} &=&c(s)e_{3}+d(s)e_{4} \\
\tilde{\nabla}_{e_{2}}e_{1} &=&a(s)b(s)e_{2}  \notag \\
\tilde{\nabla}_{e_{1}}e_{2} &=&0  \notag \\
\tilde{\nabla}_{e_{2}}e_{2} &=&-a(s)b(s)e_{1}+b(s)e_{3}  \notag \\
\tilde{\nabla}_{e_{1}}e_{3} &=&c(s)e_{1}+a(s)d(s)e_{4}  \notag \\
\tilde{\nabla}_{e_{2}}e_{3} &=&b(s)e_{2}  \notag \\
\tilde{\nabla}_{e_{1}}e_{4} &=&-d(s)e_{1}+a(s)d(s)e_{3}  \notag \\
\tilde{\nabla}_{e_{2}}e_{4} &=&0  \notag
\end{eqnarray}%
where
\begin{equation}
a(s)=\frac{\alpha _{1}^{\prime }(s)}{\sqrt{1+\left( \alpha _{1}^{\prime
}(s)\right) ^{2}}}
\end{equation}%
\begin{equation}
b(s)=\frac{\sqrt{1+\left( \alpha _{1}^{\prime }(s)\right) ^{2}}}{\alpha
_{1}(s)}
\end{equation}%
\begin{equation}
c(s)=\frac{\alpha _{1}^{\prime \prime }(s)}{\sqrt{1+\left( \alpha
_{1}^{\prime }(s)\right) ^{2}}}
\end{equation}%
\begin{equation}
d(s)=\frac{-\alpha _{3}^{\prime \prime }(s)\alpha _{4}^{\prime }(s)+\alpha
_{4}^{\prime \prime }(s)\alpha _{3}^{\prime }(s)}{\sqrt{1+\left( \alpha
_{1}^{\prime }(s)\right) ^{2}}}
\end{equation}%
The Gaussian curvature $K$ of $M$ is given by
\begin{equation}
K=-b(s)c(s)
\end{equation}%
The mean curvature $H$ of $M$ is given by
\begin{equation}
H=\frac{1}{2}\left( -h_{1}e_{3}+h_{2}e_{4}\right) \text{ \  \ }h_{1}=-\left(
b+c\right) \text{ and }h_{2}=d
\end{equation}

By using $\left( 4\right) ,$ $\left( 9\right) $ and straight-forward
computation, the Laplacian $\Delta G$ of the Gauss map $G$ can be expressed
as%
\begin{equation}
\Delta G=A(s)\left( e_{1}\wedge e_{2}\right) +B(s)\left( e_{2}\wedge
e_{3}\right) +D(s)\left( e_{2}\wedge e_{4}\right)
\end{equation}%
where
\begin{equation}
A(s)=d^{2}\left( s\right) -b^{2}\left( s\right) -c^{2}\left( s\right)
\end{equation}%
\begin{equation}
B(s)=b^{\prime }\left( s\right) +c^{\prime }\left( s\right) +a(s)d^{2}\left(
s\right)
\end{equation}%
\begin{equation}
D(s)=d^{\prime }\left( s\right) +a(s)d\left( s\right) \left( b\left(
s\right) +c\left( s\right) \right)
\end{equation}

\begin{theorem}
\label{teo 1}Let $M$ be the flat rotation surface given by the
parametrization (5). Then $M$ has pointwise 1-type Gauss map if and only if
the profile curve of $M$ is parametrized by%
\begin{eqnarray}
\alpha _{1}(s) &=&a_{1} \\
\alpha _{3}(s) &=&\frac{1}{a_{2}}\left( 1+a_{1}^{2}\right) ^{\frac{1}{2}%
}\cos \left( a_{2}s+a_{3}\right)  \notag \\
\alpha _{4}(s) &=&-\frac{1}{a_{2}}\left( 1+a_{1}^{2}\right) ^{\frac{1}{2}%
}\sin \left( a_{2}s+a_{3}\right)  \notag
\end{eqnarray}

or%
\begin{eqnarray}
\alpha _{1}(s) &=&b_{1}s+b_{2} \\
\alpha _{3}(s) &=&\int \left( 1+b_{1}^{2}\right) ^{\frac{1}{2}}\cos \left(
b\ln \left \vert b_{1}s+b_{2}\right \vert \right) ds  \notag \\
\alpha _{4}(s) &=&\int \left( 1+b_{1}^{2}\right) ^{\frac{1}{2}}\sin \left(
b\ln \left \vert b_{1}s+b_{2}\right \vert \right) ds  \notag
\end{eqnarray}%
where $a_{1},$\ $a_{2},$\ $a_{3}$ $b_{1}\neq 0,$\ $b_{2}$, $b_{3}$ and $b=%
\frac{b_{3}}{b_{1}\left( 1+b_{1}^{2}\right) ^{\frac{1}{2}}}$ are real
constants.
\end{theorem}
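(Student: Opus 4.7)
The plan is to combine flatness with the pointwise 1-type condition to force structural constraints on the profile curve, and then to solve the resulting ODE. From (13), $K=-b(s)c(s)$, and since $b(s)\neq 0$ by (11), flatness forces $c(s)=0$, i.e.\ $\alpha_1''(s)=0$. Hence $\alpha_1(s)=b_1 s+b_2$ for constants $b_1,b_2$, and the two alternatives in the theorem correspond to $b_1=0$ (formulas (20)) and $b_1\neq 0$ (formulas (21)).

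Writing $G=e_1\wedge e_2$ and using (15), the condition $\Delta G=f(G+C)$ is equivalent to
\begin{equation*}
fC=(A-f)(e_1\wedge e_2)+B(e_2\wedge e_3)+D(e_2\wedge e_4),
\end{equation*}
so the unknown constant bivector $C$ has components only along $(e_1\wedge e_2)$, $(e_2\wedge e_3)$, and $(e_2\wedge e_4)$. I would impose $\tilde{\nabla}_{e_1}C=\tilde{\nabla}_{e_2}C=0$ and use the connection formulas (9) to compute $\tilde{\nabla}_{e_k}(e_i\wedge e_j)$. The system splits cleanly: the equations coming from $\tilde{\nabla}_{e_2}C=0$ force $D=0$ and relate the remaining two coefficients of $C$ by the factor $a(s)$, while the equations coming from $\tilde{\nabla}_{e_1}C=0$ require those coefficients to be constant in $s$; the latter is automatic once one observes that $B=aA$ whenever $c=0$.

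To solve the surviving ODE $D=d'+adb=0$, I would parametrize the unit-speed profile by an angle function $\psi(s)$ via $\alpha_3'=\sqrt{1+(\alpha_1')^2}\cos\psi$, $\alpha_4'=\sqrt{1+(\alpha_1')^2}\sin\psi$, which yields $d(s)=\sqrt{1+(\alpha_1')^2}\,\psi'(s)$. In the case $b_1=0$ one has $a=0$, so the ODE collapses to $\psi''=0$, giving $\psi(s)=a_2 s+a_3$ and, after integration, (20). In the case $b_1\neq 0$ it becomes $\psi''+\frac{b_1}{b_1 s+b_2}\psi'=0$, whose general solution integrates to $\psi(s)=b\ln|b_1 s+b_2|+\mathrm{const}$ with $b=b_3/(b_1\sqrt{1+b_1^2})$, matching (21). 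The converse is a direct verification: substituting the prescribed $\alpha_i$ into (10)--(18) shows that $\Delta G=f(G+C)$ holds with the explicit constant $C$ identified above and $f=A/(1+b_1^2)$.

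The technical heart is the covariant-derivative calculation in $\wedge^2\mathbb{E}_1^4$: the constancy of $C$ is a statement in the \emph{fixed} ambient basis, while everything in (15) lives in the \emph{moving} frame $\{e_i\wedge e_j\}$. Showing that among the equations coming from $\tilde{\nabla}_{e_1}C=\tilde{\nabla}_{e_2}C=0$ the only non-trivial constraint reduces to $D=0$---and that the case $b_1=0$ forces first-kind ($C=0$) while the case $b_1\neq 0$ admits a genuine second-kind $C=a(1+b_1^2)\bigl[a(e_1\wedge e_2)+(e_2\wedge e_3)\bigr]$---is where the careful bookkeeping must be done without drowning in the six-dimensional bivector algebra.
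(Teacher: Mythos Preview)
Your approach is correct and essentially the same as the paper's. The only organizational difference is that you invoke flatness at the outset to force $c=0$ (hence $\alpha_1''=0$) before analyzing the constancy of $C$, whereas the paper first writes down the vanishing components $\langle C,e_1\wedge e_3\rangle=\langle C,e_1\wedge e_4\rangle=\langle C,e_3\wedge e_4\rangle=0$, differentiates those to obtain $D=0$ and $A-aB=f$, and only then brings in flatness in the case $a\neq 0$; the resulting ODE $d'+abd=0$ and its integration via the angle function are identical in both treatments, and your explicit $f=A/(1+b_1^2)$ and $C$ agree with the paper's formulas after using $b'=-ab^2$.
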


\begin{proof}
Let $M$ be the flat rotation surface given by the parametrization (5). We
suppose that $M$ has pointwise 1-type Gauss map. By using (1) and (16), we
have
\begin{eqnarray}
f+f\left \langle C,e_{1}\wedge e_{2}\right \rangle &=&A(s) \\
f\left \langle C,e_{2}\wedge e_{3}\right \rangle &=&-B(s)  \notag \\
f\left \langle C,e_{2}\wedge e_{4}\right \rangle &=&D(s)  \notag
\end{eqnarray}%
and
\begin{equation}
\left \langle C,e_{1}\wedge e_{3}\right \rangle =\left \langle C,e_{1}\wedge
e_{4}\right \rangle =\left \langle C,e_{3}\wedge e_{4}\right \rangle =0
\end{equation}%
By differentiating (23) covariantly with respect to $s,$ we have
\begin{eqnarray*}
-a(s)B(s)+A(s)-f &=&0 \\
a(s)D(s) &=&0 \\
D(s) &=&0
\end{eqnarray*}%
In this case, firstly, we assume that $a(s)=0$ and $D(s)=0.$ From (10), we
obtain that $\alpha _{1}(s)=a_{1}.$ Since the profile curve is unit speed
spacelike curve, we can write $\left( \alpha _{3}^{\prime }(s)\right)
^{2}+\left( \alpha _{4}^{\prime }(s)\right) ^{2}=1+a_{1}^{2}.$ Also we can
put%
\begin{eqnarray}
\alpha _{3}^{\prime }(s) &=&\left( 1+a_{1}^{2}\right) ^{\frac{1}{2}}\cos
\theta \left( s\right) \\
\alpha _{4}^{\prime }(s) &=&\left( 1+a_{1}^{2}\right) ^{\frac{1}{2}}\sin
\theta \left( s\right)  \notag
\end{eqnarray}%
where $\theta $ is smooth angle function. On the other hand, since $D(s)=0,$
from (19) we obtain as
\begin{equation}
d\left( s\right) =a_{2},\text{ \  \  \  \ }a_{2}\text{\ is non zero constant.}
\end{equation}%
By using (13), (24) and (25) we get
\begin{equation}
\theta \left( s\right) =a_{2}s+a_{3}
\end{equation}%
So from (24) and (26) we have
\begin{eqnarray*}
\alpha _{3}(s) &=&\frac{1}{a_{2}}\left( 1+a_{1}^{2}\right) ^{\frac{1}{2}%
}\cos \left( a_{2}s+a_{3}\right) \\
\alpha _{4}(s) &=&-\frac{1}{a_{2}}\left( 1+a_{1}^{2}\right) ^{\frac{1}{2}%
}\sin \left( a_{2}s+a_{3}\right)
\end{eqnarray*}%
Now we assume that $a(s)\neq 0$ and $D(s)=0.$ Since $M$ is flat, (12) and
(14) imply that
\begin{equation}
\alpha _{1}(s)=b_{1}s+b_{2}
\end{equation}%
for some constants $b_{1}\neq 0$ and $b_{2}=0.$ Since the profile curve is
unit speed spacelike curve, we can write $\left( \alpha _{3}^{\prime
}(s)\right) ^{2}+\left( \alpha _{4}^{\prime }(s)\right) ^{2}=1+b_{1}^{2}.$
Also we can put%
\begin{eqnarray}
\alpha _{3}^{\prime }(s) &=&\left( 1+b_{1}^{2}\right) ^{\frac{1}{2}}\cos
\theta \left( s\right) \\
\alpha _{4}^{\prime }(s) &=&\left( 1+b_{1}^{2}\right) ^{\frac{1}{2}}\sin
\theta \left( s\right)  \notag
\end{eqnarray}%
where $\theta $ is smooth angle function. By using (10), (11) and $(19),$ we
get%
\begin{equation}
d(s)=\frac{b_{3}}{b_{1}s+b_{2}}
\end{equation}%
On the other hand, by using (13), (27) and (28) we have
\begin{equation}
d(s)=\left( 1+b_{1}^{2}\right) ^{\frac{1}{2}}\theta ^{\prime }\left( s\right)
\end{equation}%
By combining (29) and (30) we obtain
\begin{equation}
\theta \left( s\right) =b\ln \left \vert b_{1}s+b_{2}\right \vert
\end{equation}%
where $b=\frac{b_{3}}{b_{1}\left( 1+b_{1}^{2}\right) ^{\frac{1}{2}}}.$ So by
substituting (31) into (28) we can write
\begin{eqnarray}
\alpha _{3}(s) &=&\int \left( 1+b_{1}^{2}\right) ^{\frac{1}{2}}\cos \left(
b\ln \left \vert b_{1}s+b_{2}\right \vert \right) ds  \notag \\
\alpha _{4}(s) &=&\int \left( 1+b_{1}^{2}\right) ^{\frac{1}{2}}\sin \left(
b\ln \left \vert b_{1}s+b_{2}\right \vert \right) ds  \notag
\end{eqnarray}

Conversely, the surface $M$ which is parametrized by (20) and (21) is
pointwise 1-type Gauss map for
\begin{equation*}
f(s)=-a(s)b^{\prime }(s)-a^{2}(s)d^{2}(s)+d^{2}(s)-b^{2}(s)
\end{equation*}%
and
\begin{equation*}
C(s)=\frac{a(s)b^{\prime }(s)+a^{2}(s)d^{2}(s)}{f(s)}\left( e_{1}\wedge
e_{2}\right) +\frac{b^{\prime }(s)+a(s)d^{2}(s)}{f(s)}\left( e_{2}\wedge
e_{3}\right)
\end{equation*}%
where it can be easily seen that $e_{1}\left( C(s)\right) =0$ and $%
e_{2}\left( C(s)\right) =0.$ This completes the proof.
\end{proof}

\begin{corollary}
\label{cor 1}Let $M$ be the flat rotation surface given by the
parametrization (5). If $M$ has pointwise 1-type Gauss map then the profile
curve of $M$ is a helix curve.
\end{corollary}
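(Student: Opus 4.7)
The plan is to invoke Theorem \ref{teo 1} directly and to verify, in each of the two families of profile curves it produces, that the tangent of the profile curve makes a constant Lorentzian angle with a fixed direction; this is the standard characterization of a generalized helix.

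First I would recall that a unit-speed curve $\alpha$ in $\mathbb{E}_1^4$ is a generalized helix when there exists a fixed nonzero vector $V$ with $\langle \alpha'(s), V\rangle$ constant. The profile curve $\alpha(s) = (\alpha_1(s), 0, \alpha_3(s), \alpha_4(s))$ lies in the timelike hyperplane $x_2 = 0$, so the natural candidate for the axis is the timelike unit vector $V = (1,0,0,0)$; with the signature of $\mathbb{E}_1^4$ this yields $\langle \alpha'(s), V\rangle = -\alpha_1'(s)$. Next I would dispatch the two cases of Theorem \ref{teo 1}. In the family (20) one has $\alpha_1(s) = a_1$, so $\alpha_1'(s) \equiv 0$ and the tangent is Lorentz-orthogonal to $V$ at every point; hence $\alpha$ is a (degenerate) helix, in fact a planar circle of radius $(1+a_1^2)^{1/2}/|a_2|$ in an affine plane parallel to the $x_3 x_4$-plane. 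In the family (21) one has $\alpha_1(s) = b_1 s + b_2$, so $\alpha_1'(s) \equiv b_1$ is a nonzero constant, and again $\langle \alpha'(s), V\rangle$ is constant, so $\alpha$ is a helix in the usual non-degenerate sense.

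Since these two families exhaust every possibility allowed by Theorem \ref{teo 1}, the corollary follows. There is no real technical obstacle here; the only point requiring care is to use the Lorentzian inner product when testing the constant-angle condition, because the first coordinate of $\mathbb{E}_1^4$ is the timelike one. If one preferred the classical formulation, one could alternatively compute the Frenet curvature $\kappa$ and torsion $\tau$ of $\alpha$ in the timelike hyperplane $\{x_2 = 0\}$ and observe that the ratio $\tau/\kappa$ is constant in both families, but the axis-based argument above is considerably shorter.
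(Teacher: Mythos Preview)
The paper states Corollary~\ref{cor 1} without proof, as an immediate consequence of Theorem~\ref{teo 1}. Your argument is exactly the natural way to fill the gap: in both families (20) and (21) one has $\alpha_1'(s)$ constant (namely $0$ and $b_1$ respectively), so the tangent of the profile curve makes a constant Lorentzian angle with the fixed timelike direction $V=(1,0,0,0)$, which is precisely the generalized-helix condition. This is correct and matches the paper's evident intent.

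One small remark: you correctly take the first coordinate as the timelike one, which is what all of the paper's computations use (see the unit-speed condition $-(\alpha_1')^2+(\alpha_3')^2+(\alpha_4')^2=1$ and the frame $e_2=(\sinh t,\cosh t,0,0)$ with $\langle e_2,e_2\rangle=1$). This is worth flagging, because the general signature formula stated in the Preliminaries would place the minus sign on the last coordinate; the paper's working convention is $(-,+,+,+)$, and your inner-product computation $\langle\alpha'(s),V\rangle=-\alpha_1'(s)$ is consistent with that.
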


We will also use the following theorems and corollary.

\begin{theorem}
\label{teo 2} \cite{dursun6} Let M be an oriented maximal surface in the
Minkowski space $E_{1}^{4}.$ Then $M$ has pointwise 1-type Gauss map of the
first kind if and only if $M$ has flat normal bundle. Hence the Gauss map $G$
satisfies (1.1) for $f=\left \Vert h\right \Vert ^{2}$ and $C=0.$
\end{theorem}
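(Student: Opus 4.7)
The plan is to compute $\Delta G$ for $G = e_1 \wedge e_2$ directly from formula (4) of the preliminaries, organize the result into components along a natural orthonormal basis of $\wedge^2 \mathbb{E}_1^4$, and then read off the equivalence. Concretely, I would use the Gauss and Weingarten identities (2) to expand
\[
\tilde{\nabla}_{e_i}(e_1\wedge e_2) \;=\; (\tilde{\nabla}_{e_i}e_1)\wedge e_2 \,+\, e_1 \wedge (\tilde{\nabla}_{e_i}e_2),
\]
so that after the cancellations $e_a\wedge e_a = 0$ only the mixed terms $e_r\wedge e_2$ and $e_1\wedge e_r$ (with $r=3,4$) survive, carrying the coefficients $h_{i1}^r$ and $h_{i2}^r$. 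Applying $\tilde{\nabla}_{e_i}$ once more and using (4) produces, after grouping, three kinds of basis 2-vectors: $e_1\wedge e_2$, the four ``mixed'' ones $e_i\wedge e_r$ ($i=1,2;\ r=3,4$), and the purely normal 2-vector $e_3\wedge e_4$.

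The next step is identification. A standard rearrangement (in the spirit of the Chen--Piccinni computation but in the Lorentzian signature) expresses the result as
\[
\Delta G \;=\; \|h\|^2\,(e_1\wedge e_2) \;+\; \Phi(DH) \;+\; \Psi(R^D),
\]
where $\Phi(DH)$ is a sum of $e_i\wedge e_r$-terms whose coefficients are the components of the covariant derivative $DH$ of the mean curvature vector in the normal bundle, and $\Psi(R^D)$ is a pure $e_3\wedge e_4$-term whose coefficient, by (3), is precisely the single independent component of the normal curvature tensor $R^D(e_1,e_2;e_3,e_4)$ (up to sign). The computation that makes this clean is the Codazzi identity, which lets one recognize the derivatives of $h_{ij}^r$ that appear as contributions to $DH$ versus to $R^D$.

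Now plug in the hypothesis. Since $M$ is maximal, $H\equiv 0$ and hence $DH\equiv 0$, so the $\Phi(DH)$-block drops out and we are left with
\[
\Delta G \;=\; \|h\|^2\,(e_1\wedge e_2) \;+\; \Psi(R^D).
\]
The equivalence is then immediate. If $R^D\equiv 0$, then $\Psi(R^D)=0$ and $\Delta G = \|h\|^2 G$, i.e.\ $G$ is of pointwise 1-type of the first kind with $f=\|h\|^2$ and $C=0$. Conversely, if $\Delta G = fG$ for some smooth $f$ (i.e.\ (1) holds with $C=0$), then $\Psi(R^D)$ must vanish identically; since its only coefficient is a nonzero multiple of $R^D(e_1,e_2;e_3,e_4)$, this forces the normal bundle to be flat, and comparing the $e_1\wedge e_2$-components gives $f=\|h\|^2$.

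The one genuinely delicate step is the middle paragraph: correctly collecting all derivatives of the $h_{ij}^r$'s and of the connection forms $\omega_{AB}$, respecting the signs $\varepsilon_i$, $\varepsilon_r$, and using the Codazzi equations to split them cleanly between a ``tangent--normal'' part (which becomes $DH$) and a ``normal--normal'' part (which becomes $R^D$). Once that accounting is carried out, the theorem reduces to reading off coefficients, and the ``hence'' clause ($f=\|h\|^2$, $C=0$) is automatic from the explicit form of $\Delta G$.
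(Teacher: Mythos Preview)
The paper does not prove this theorem; it is quoted from Dursun and Turgay \cite{dursun6} and invoked later only as a black box in the proof of Corollary~\ref{cor 4}. So there is no ``paper's own proof'' to compare your proposal against.

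That said, your outline is the standard route to this result and is essentially what the cited reference does: compute $\Delta(e_1\wedge e_2)$ term by term, group the output into a tangential piece $\|h\|^{2}\,e_1\wedge e_2$, mixed pieces along $e_i\wedge e_r$, and a purely normal piece along $e_3\wedge e_4$; use Codazzi to recognize the mixed coefficients as components of $D_{e_i}H$ (hence zero when $H\equiv 0$); and identify the $e_3\wedge e_4$-coefficient with the single component of $R^D$ via (3). The equivalence and the values $f=\|h\|^{2}$, $C=0$ then drop out exactly as you say. The only point to handle carefully, which you acknowledge but do not execute, is the sign bookkeeping coming from $\varepsilon_3=-1$, $\varepsilon_4=+1$ in the Lorentzian normal bundle; once those are tracked the argument goes through unchanged.
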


\begin{theorem}
\label{teo 3} \cite{ortega1} Let M be a spacelike rotational surface in
Minkowski 4-space given by the parametrization (5). If $M$ marginally
trapped surface then
\begin{eqnarray}
\alpha _{3}(s) &=&\int \left( 1+\left( \alpha _{1}^{\prime }\right)
^{2}\right) ^{\frac{1}{2}}\cos \theta \left( s\right) ds \\
\alpha _{4}(s) &=&\int \left( 1+\left( \alpha _{1}^{\prime }\right)
^{2}\right) ^{\frac{1}{2}}\sin \theta \left( s\right) ds  \notag
\end{eqnarray}%
and%
\begin{equation}
\theta \left( s\right) =-\epsilon \int \frac{1+\left( \alpha _{1}^{\prime
}\right) ^{2}+\alpha _{1}^{\prime }\alpha _{1}^{\prime \prime }}{\alpha
_{1}\left( 1+\left( \alpha _{1}^{\prime }\right) ^{2}\right) ^{\frac{1}{2}}}
\end{equation}%
where $\epsilon =\pm .$
\end{theorem}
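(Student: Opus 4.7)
The plan is to turn the marginally-trapped condition into a first-order ODE for an angle function $\theta(s)$, and then integrate it to recover $\alpha_3$, $\alpha_4$, and $\theta$ in the stated form. First, since the profile curve is unit-speed and spacelike, $(\alpha_3')^2 + (\alpha_4')^2 = 1 + (\alpha_1')^2$, so there exists a smooth function $\theta$ with
$$\alpha_3'(s) = \sqrt{1+(\alpha_1')^2}\,\cos\theta(s), \qquad \alpha_4'(s) = \sqrt{1+(\alpha_1')^2}\,\sin\theta(s).$$
Integrating in $s$ yields the asserted formulas (32), so it only remains to identify $\theta'$ from the marginally-trapped hypothesis.

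Next, I would convert the marginally-trapped condition into an algebraic relation among $b, c, d$. By (15), $H=\tfrac{1}{2}(-h_{1}e_{3}+h_{2}e_{4})$ with $h_1 = -(b+c)$ and $h_2 = d$, and since $e_3$ is timelike while $e_4$ is spacelike, $\langle H,H\rangle = \tfrac{1}{4}\bigl(d^{2}-(b+c)^{2}\bigr)$. Hence $H$ is null (and nonzero) if and only if $b+c = -\epsilon\, d$ for some sign $\epsilon = \pm 1$.

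I then express $b, c, d$ in the chosen ansatz: formulas (11) and (12) give $b$ and $c$ directly from $\alpha_1$ and its derivatives. For $d$, differentiate the ansatz to get
$$\alpha_3'' = \frac{\alpha_1'\alpha_1''}{\sqrt{1+(\alpha_1')^2}}\cos\theta - \sqrt{1+(\alpha_1')^2}\,\theta'\sin\theta,$$
and an analogous expression for $\alpha_4''$. Substituting into (13) and applying $\cos^{2}\theta+\sin^{2}\theta = 1$, the $\alpha_1'\alpha_1''$ cross-terms cancel while the $\theta'$ contributions regroup to yield $d(s)=\sqrt{1+(\alpha_1')^2}\,\theta'(s)$. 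Inserting this together with (11) and (12) into $b+c = -\epsilon d$ produces an ODE for $\theta$, which I can solve algebraically for $\theta'$ as a rational expression in $\alpha_1, \alpha_1', \alpha_1''$; integration then gives the stated formula (33).

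The principal obstacle is the reduction in the third step: one must carefully expand $-\alpha_3''\alpha_4'+\alpha_4''\alpha_3'$ after substitution and verify that all non-$\theta'$ pieces cancel via the Pythagorean identity. Once $d = \sqrt{1+(\alpha_1')^2}\,\theta'$ is clean, the remainder of the argument is routine algebraic rearrangement and a single antidifferentiation.
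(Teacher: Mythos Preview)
Your approach is sound and is the natural one, but note that the paper does not supply its own proof of this theorem: it is quoted from Haesen--Ortega \cite{ortega1} and stated without argument. The steps you outline --- introducing the angle $\theta$ via the unit-speed relation, reading off $d=\sqrt{1+(\alpha_1')^{2}}\,\theta'$ from (13), and combining with $-(b+c)=\epsilon d$ --- are exactly the ingredients the authors themselves deploy later in the proof of Theorem~\ref{teo 4} (see equations (30) and (35)), so your argument is entirely consistent with the paper's methods; there is simply no proof here to compare against.

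One minor caution: when you carry out the final step you will obtain
\[
\theta'=-\epsilon\,\frac{1+(\alpha_1')^{2}+\alpha_1\alpha_1''}{\alpha_1\bigl(1+(\alpha_1')^{2}\bigr)},
\]
which differs from the printed (33) in two places (the numerator should have $\alpha_1\alpha_1''$ rather than $\alpha_1'\alpha_1''$, and the denominator carries the full factor $1+(\alpha_1')^{2}$ rather than its square root). This looks like a transcription slip in the paper, not a flaw in your reasoning.
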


\begin{corollary}
\label{cor 2}\cite{ortega1} Let M be a spacelike rotational surface in
Minkowski 4-space given by the parametrization (5). If $M$ is a extremal
surface then a unit profile curve is given by
\begin{equation*}
\alpha \left( s\right) =\left( f(s),0,\cos \zeta _{0}\sqrt{a_{1}}\arctan
\left( \frac{s+a_{2}}{f(s)}\right) ,\sin \zeta _{0}\sqrt{a_{1}}\arctan
\left( \frac{s+a_{2}}{f(s)}\right) \right) ,
\end{equation*}%
where $f(s)=\sqrt{a_{1}-\left( s+a_{2}\right) ^{2}}$and $a_{1},a_{2},\zeta
_{0}\in \mathbb{R},$ $a_{1}>0,$ being integration constants. In particular,
the surface $M$ is immersed in a totally geodesic Lorentzian 3-space.
\end{corollary}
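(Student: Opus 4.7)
The plan is to translate the extremal condition $H=0$ into two scalar equations via the mean curvature formula (15), solve the resulting ordinary differential equations for the components of the profile curve, and then identify an affine 3-plane that contains the entire surface.

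First I would use equation (15): $H=\tfrac12(-h_1 e_3 + h_2 e_4)$ with $h_1=-(b+c)$ and $h_2=d$. Since $\{e_3,e_4\}$ is linearly independent, $H=0$ forces $b(s)+c(s)=0$ and $d(s)=0$. Substituting the explicit expressions (11) and (12) for $b$ and $c$, the first equation becomes
\begin{equation*}
\frac{\sqrt{1+(\alpha_1')^2}}{\alpha_1}+\frac{\alpha_1''}{\sqrt{1+(\alpha_1')^2}}=0,
\end{equation*}
equivalently $\alpha_1\alpha_1''+(\alpha_1')^2+1=0$. The key trick is to notice that the left-hand side equals $(\alpha_1\alpha_1')'+1$, so one integration gives $\alpha_1\alpha_1'=-(s+a_2)$ and a second gives $\alpha_1(s)^2=a_1-(s+a_2)^2$. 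Thus $\alpha_1(s)=f(s)=\sqrt{a_1-(s+a_2)^2}$, with $a_1>0$ required for $\alpha_1$ to be real on an open interval.

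Next I would exploit $d(s)=0$. By the definition (13) this reads $\alpha_4''\alpha_3'-\alpha_3''\alpha_4'=0$, which is exactly $\frac{d}{ds}(\alpha_4'/\alpha_3')=0$ (wherever $\alpha_3'\neq 0$); hence there is a constant $\zeta_0$ with $\alpha_3'=K(s)\cos\zeta_0$ and $\alpha_4'=K(s)\sin\zeta_0$ for some scalar function $K$. The spacelike unit-speed condition $-(\alpha_1')^2+(\alpha_3')^2+(\alpha_4')^2=1$ then forces $K^2=1+(\alpha_1')^2$. A short calculation with $\alpha_1'=-(s+a_2)/f(s)$ yields $1+(\alpha_1')^2=a_1/f(s)^2$, so $K(s)=\sqrt{a_1}/f(s)$.

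It remains to integrate $\alpha_3'$ and $\alpha_4'$. The needed antiderivative is
\begin{equation*}
\int\frac{ds}{\sqrt{a_1-(s+a_2)^2}}=\arcsin\!\Bigl(\frac{s+a_2}{\sqrt{a_1}}\Bigr)=\arctan\!\Bigl(\frac{s+a_2}{f(s)}\Bigr),
\end{equation*}
where the second equality is the standard identity $\arcsin(x/R)=\arctan\bigl(x/\sqrt{R^2-x^2}\bigr)$. Multiplying by $\sqrt{a_1}\cos\zeta_0$ and $\sqrt{a_1}\sin\zeta_0$ respectively (and absorbing the remaining integration constants into a rigid motion that may be assumed trivial) produces the stated expressions for $\alpha_3$ and $\alpha_4$. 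Finally, since $\sin\zeta_0\,\alpha_3(s)-\cos\zeta_0\,\alpha_4(s)\equiv 0$ and the boost action in (5) fixes the last two coordinates, the entire surface lies in the hyperplane $\{\sin\zeta_0\, x_3-\cos\zeta_0\, x_4=0\}$; this hyperplane inherits from $\mathbb{E}_1^4$ a Lorentzian metric (the $x_1,x_2$ boost plane is contained in it) and is a totally geodesic 3-space, completing the proof. The main obstacle is recognizing that the first-order nonlinear ODE for $\alpha_1$ is an exact derivative $(\alpha_1\alpha_1')'=-1$; once this is spotted, every remaining step is routine calculus and elementary linear algebra.
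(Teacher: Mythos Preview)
Your argument is correct. The paper does not actually prove this corollary; it is quoted from \cite{ortega1} without proof, so there is nothing in the present paper to compare against. Your direct approach --- reading off $b+c=0$ and $d=0$ from (15), recognising $\alpha_1\alpha_1''+(\alpha_1')^2+1=(\alpha_1\alpha_1')'+1$, and then integrating $K(s)=\sqrt{a_1}/f(s)$ via the $\arcsin=\arctan$ identity --- recovers the stated profile curve cleanly, and the hyperplane observation at the end is exactly the right way to see the totally geodesic Lorentzian $3$-space.
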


\begin{theorem}
\label{teo 4}Let $M$ be the marginally trapped surface given by the
parametrization (5) in Minkowski 4-space. Then $M$ has pointwise 1-type
Gauss map if and only if the profile curve is given by or%
\begin{eqnarray}
\alpha _{1}(s) &=&\left( \lambda _{1}-1\right) ^{\frac{1}{2}}\left(
u^{2}\left( s\right) +\lambda ^{2}\right) ^{\frac{1}{2}} \\
\alpha _{3}(s) &=&\int \left( \frac{\lambda _{1}u^{2}+\lambda ^{2}}{%
u^{2}+\lambda ^{2}}\right) ^{\frac{1}{2}}\cos \theta \left( s\right) ds
\notag \\
\alpha _{4}(s) &=&\int \left( \frac{\lambda _{1}u^{2}+\lambda ^{2}}{%
u^{2}+\lambda ^{2}}\right) ^{\frac{1}{2}}\sin \theta \left( s\right) ds
\notag
\end{eqnarray}%
and
\begin{equation*}
\theta \left( s\right) =-\epsilon \frac{\lambda _{1}}{\left( \lambda
_{1}-1\right) ^{\frac{1}{2}}}\int \frac{\left( u^{2}+\lambda ^{2}\right) ^{%
\frac{1}{2}}}{\lambda _{1}u^{2}+\lambda ^{2}}ds
\end{equation*}%
where $u\left( s\right) =\delta s+\lambda _{3},$ $\lambda =\frac{\lambda _{2}%
}{\lambda _{1}-1},$ $\lambda _{1},$\ $\lambda _{2}$, $\lambda _{3},$ $a_{1}$
and $a_{2}$ are real constants.
\end{theorem}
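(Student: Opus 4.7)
The strategy is to turn the pointwise 1-type equation $\Delta G = f(G+C)$ into algebraic conditions on the invariants $a,b,c,d$, exploit the marginally trapped hypothesis to isolate a single ODE on the profile function $\alpha_{1}$, and then integrate.

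From $\|H\|^{2}=0$ and (15) one gets $(b+c)^{2}=d^{2}$, so $b+c=\epsilon d$ with $\epsilon=\pm 1$. Expanding the constant vector $C\in\wedge^{2}E_{1}^{4}$ in the orthogonal basis $\{e_{i}\wedge e_{j}\}$ and comparing coefficients of (16) against $f(G+C)$ gives (23) together with $\langle C,e_{1}\wedge e_{3}\rangle=\langle C,e_{1}\wedge e_{4}\rangle=\langle C,e_{3}\wedge e_{4}\rangle=0$. Because $C$ is constant, $\tilde{\nabla}_{e_{2}}C=0$; using (9) and the fact that $e_{2}$ annihilates functions of $s$, the $e_{1}\wedge e_{3}$, $e_{1}\wedge e_{4}$ and $e_{3}\wedge e_{4}$ components yield (as $b\neq 0$) the identities $c_{12}=a c_{23}$ and $c_{24}=0$. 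In particular $D=f c_{24}=0$, which combined with marginal trapping reduces to the single nonlinear ODE
\[
d'+\epsilon a\,d^{2}=0.
\]

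Set $R=\sqrt{1+(\alpha_{1}')^{2}}$ and $U=1+(\alpha_{1}')^{2}+\alpha_{1}\alpha_{1}''$. By (10)--(12) we have $b+c=U/(\alpha_{1}R)$, so $d=\epsilon U/(\alpha_{1}R)$. The identity $RR'=\alpha_{1}'\alpha_{1}''$ gives $(\alpha_{1}R)'=\alpha_{1}' U/R$, and a direct expansion then produces
\[
d'+\epsilon a\,d^{2}=\frac{\epsilon\,U'}{\alpha_{1}R}.
\]
Hence the ODE collapses to $U'=0$, so $U=\lambda_{1}$ for a real constant $\lambda_{1}$. This means $(\alpha_{1}^{2})''=2(\lambda_{1}-1)$; two integrations and completing the square produce $\alpha_{1}^{2}=(\lambda_{1}-1)(u^{2}+\lambda^{2})$ with $u(s)=\delta s+\lambda_{3}$ and $\lambda=\lambda_{2}/(\lambda_{1}-1)$, matching (34).

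Since $M$ is marginally trapped, $\theta$ is determined by Ortega's identity from Theorem 3. Substituting the closed form of $\alpha_{1}$ and using $U=\lambda_{1}$, the integrand simplifies to $\lambda_{1}/[\alpha_{1}(1+(\alpha_{1}')^{2})]=\lambda_{1}(u^{2}+\lambda^{2})^{1/2}/[(\lambda_{1}-1)^{1/2}(\lambda_{1}u^{2}+\lambda^{2})]$, giving the stated $\theta(s)$, and the formulas for $\alpha_{3},\alpha_{4}$ then follow from $\alpha_{3}'=R\cos\theta$, $\alpha_{4}'=R\sin\theta$. Conversely, for a profile curve of the stated form one obtains $U=\lambda_{1}$ and checks $D=0$ directly; together with $b+c=\epsilon d$ and the resulting $b'+c'=-ad^{2}$ this forces $B=0$, so $\Delta G$ is a scalar multiple of $G$ and $M$ has pointwise 1-type Gauss map (of the first kind) with $f=A=2bc$ and $C=0$. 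The decisive step is the collapse of $d'+\epsilon a d^{2}=0$ to $U'=0$; it rests on the identity $(\alpha_{1}R)'=\alpha_{1}'U/R$, which makes the nonlinear term $\epsilon a d^{2}$ exactly cancel the contribution from differentiating $1/(\alpha_{1}R)$ in $d'$. Once this cancellation is spotted, everything afterwards is elementary integration and substitution into Ortega's identity.
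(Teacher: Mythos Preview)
Your proof is correct and follows essentially the same route as the paper's: use the marginally trapped condition to set $b+c=\pm d$, exploit the constancy of $C$ to force $D=0$ (the paper's $N=0$), reduce this to the single ODE $U'=0$ with $U=1+(\alpha_1')^2+\alpha_1\alpha_1''$, integrate to the stated $\alpha_1$, and then read off $\theta$ from Theorem~3; the converse with $f=2bc$ and $C=0$ is identical to the paper's. Your explicit identity $(\alpha_1 R)'=\alpha_1' U/R$ making the cancellation transparent is in fact cleaner than the paper's presentation, and your $U$ has $\alpha_1\alpha_1''$ rather than the paper's $\alpha_1'\alpha_1''$, which is the correct expression (the paper's displayed ODE contains a typo, as one sees by checking that its stated solution satisfies your equation, not theirs).

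One small omission: the paper separately treats the branch $a\equiv 0$ (i.e.\ $\alpha_1$ constant), where $c=0$ and $d=\pm b$ force $A=d^2-b^2-c^2=0$, hence $\Delta G=0$ and $f=0$, contradicting the definition of pointwise 1-type. In your argument this case still satisfies $U'=0$ (with $U=1$, $\lambda_1=1$), so it is only excluded implicitly by the form of the final formula; it would be worth one sentence to rule it out explicitly.
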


\begin{proof}
Let $M$ be marginally trapped surface. This means $\left \Vert H\right \Vert
=0 $ that is $\left \langle H,H\right \rangle =0.$ By using (15), we get
\begin{equation}
-\left( b(s)+c(s)\right) =\epsilon d(s)
\end{equation}%
where $\epsilon =\pm .$ In this case, by using (35) we can rewrite the
Laplacian $\Delta G$ of the Gauss map $G$ as%
\begin{equation}
\Delta G=A(s)\left( e_{1}\wedge e_{2}\right) -\epsilon N(s)\left(
e_{2}\wedge e_{3}\right) +N(s)\left( e_{2}\wedge e_{4}\right)
\end{equation}%
where
\begin{equation}
N(s)=d^{\prime }\left( s\right) -\epsilon a(s)d^{2}\left( s\right)
\end{equation}%
We assume that $M$ has pointwise 1-type Gauss map. Then we have
\begin{eqnarray}
f+f\left \langle C,e_{1}\wedge e_{2}\right \rangle &=&A(s) \\
f\left \langle C,e_{2}\wedge e_{3}\right \rangle &=&\epsilon N(s)  \notag \\
f\left \langle C,e_{2}\wedge e_{4}\right \rangle &=&N(s)  \notag
\end{eqnarray}%
and
\begin{equation}
\left \langle C,e_{1}\wedge e_{3}\right \rangle =\left \langle C,e_{1}\wedge
e_{4}\right \rangle =\left \langle C,e_{3}\wedge e_{4}\right \rangle =0
\end{equation}%
By differentiating (39) covariantly with respect to $s,$ we have
\begin{eqnarray*}
\epsilon a(s)N(s)+A(s)-f &=&0 \\
a(s)N(s) &=&0 \\
N(s) &=&0
\end{eqnarray*}%
In this case, firstly, we assume that $a(s)=0$ and $N(s)=0.$ From (10) and
(12), we obtain that $\alpha _{1}(s)=a_{1}$ and $c(s)=0,$ respectively.
Hence from (35) we get
\begin{equation}
-b(s)=\epsilon d(s)
\end{equation}%
By using $(40)$ and (17) we obtain that $A(s)=0$. So we have that $f=0.$
This is a contradiction.

Now we assume that $a(s)\neq 0$ and $N(s)=0.$ By combining (10), (11), (12),
(13), (35) and (37), we obtain a differential equation as follows:%
\begin{equation*}
\left( 1+\left( \alpha _{1}^{\prime }(s)\right) ^{2}+\alpha _{1}^{\prime
}(s)\alpha _{1}^{\prime \prime }(s)\right) ^{\prime }\alpha _{1}(s)\left(
1+\left( \alpha _{1}^{\prime }(s)\right) ^{2}\right) =0
\end{equation*}%
Since $\alpha _{1}>0$ and $1+\left( \alpha _{1}^{\prime }(s)\right) ^{2}\neq
0$ we have%
\begin{equation*}
1+\left( \alpha _{1}^{\prime }(s)\right) ^{2}+\alpha _{1}^{\prime }(s)\alpha
_{1}^{\prime \prime }(s)=\lambda _{1}
\end{equation*}%
whose the solition
\begin{equation}
\alpha _{1}(s)=\left( \lambda _{1}-1\right) ^{\frac{1}{2}}\left( \left(
\delta s+\lambda _{3}\right) ^{2}+\frac{\lambda _{2}}{\left( \lambda
_{1}-1\right) ^{2}}\right) ^{\frac{1}{2}}
\end{equation}%
By using (33) and (41) we get
\begin{equation}
\theta \left( s\right) =-\epsilon \mu \int \frac{\left( u^{2}+\lambda
\right) ^{\frac{1}{2}}}{\lambda _{1}u^{2}+\lambda }ds
\end{equation}%
where $u(s)=\delta s+\lambda _{3}$, $\lambda =\frac{\lambda _{2}}{\lambda
_{1}-1}$ and $\mu =\frac{\lambda _{1}}{\left( \lambda _{1}-1\right) ^{\frac{1%
}{2}}}$.

Conversely, the surface $M$ which is parametrized\ by (34) has pointwise
1-type Gauss map with%
\begin{equation*}
f(s)=2b\left( s\right) c\left( s\right)
\end{equation*}%
and
\begin{equation*}
C(s)=0
\end{equation*}%
This completes the proof.
\end{proof}

\begin{corollary}
\label{cor 3}Let $M$ be marginally trapped surface given by the
parametrization (5) in Minkowski 4-space. Then $M$ has pointwise 1-type
Gauss map then M is pointwise 1-type Gauss map of the first kind.
\end{corollary}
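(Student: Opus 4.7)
The plan is to leverage the analysis already carried out in the proof of Theorem \ref{teo 4}, which showed that whenever $M$ is a marginally trapped boost invariant surface with pointwise 1-type Gauss map, one necessarily has $N(s)\equiv 0$ identically (the subcase $a\equiv 0$ was ruled out by a contradiction, and in the remaining subcase $a\neq 0$ the vanishing of $N$ was forced). Starting from $N\equiv 0$, I would analyze the constant vector $C$ appearing in the defining equation (1) and show that it is forced to vanish.

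First, substituting $N\equiv 0$ into the system (39) and using that $f$ is non-zero (as required in the definition of a pointwise 1-type Gauss map), I immediately obtain $\langle C,e_{2}\wedge e_{3}\rangle =0$ and $\langle C,e_{2}\wedge e_{4}\rangle =0$. Combining these with the three orthogonality relations already recorded in (40), I conclude that $C$ can only have a nontrivial component along the bivector $e_{1}\wedge e_{2}$, so that $C=\kappa(s)\,e_{1}\wedge e_{2}$ for some smooth function $\kappa$.

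Next, I would exploit the fact that $C$ is a constant vector of $\mathbb{E}_{1}^{4}$ (hence a parallel element of $\wedge^{2}\mathbb{E}_{1}^{4}$), so $\tilde{\nabla}_{e_{1}}C=0$. Using the identities $\tilde{\nabla}_{e_{1}}e_{1}=c(s)e_{3}+d(s)e_{4}$ and $\tilde{\nabla}_{e_{1}}e_{2}=0$ from (9) and expanding the wedge,
\begin{equation*}
0=\kappa'(s)\,e_{1}\wedge e_{2}-\kappa(s)c(s)\,e_{2}\wedge e_{3}-\kappa(s)d(s)\,e_{2}\wedge e_{4},
\end{equation*}
and matching coefficients of linearly independent bivectors forces $\kappa'\equiv 0$ together with $\kappa c\equiv \kappa d\equiv 0$.

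Finally, the marginally trapped relation (35), namely $-(b+c)=\epsilon d$, combined with the strict positivity $b(s)=\sqrt{1+(\alpha_{1}'(s))^{2}}/\alpha_{1}(s)>0$, prevents $c$ and $d$ from vanishing simultaneously at any point; therefore $\kappa(s)=0$ pointwise, so $C=0$ and the pointwise 1-type Gauss map is of the first kind. I do not anticipate any real obstacle here beyond book-keeping of signs coming from the indefinite inner product on $\wedge^{2}\mathbb{E}_{1}^{4}$ when reading off components, but since every target quantity is zero the sign conventions do not affect the final conclusion.
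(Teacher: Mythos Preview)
Your argument is correct. The paper states Corollary~\ref{cor 3} without proof, treating it as immediate from the proof of Theorem~\ref{teo 4}: there it is shown that $N(s)\equiv 0$, so (36) collapses to $\Delta G=A(s)\,(e_{1}\wedge e_{2})=A(s)\,G$, and the converse direction records explicitly that $f=2b(s)c(s)$ and $C=0$ work.

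The difference is that the paper merely exhibits a pair $(f,C)$ with $C=0$ satisfying (1), which already suffices for the first-kind conclusion, whereas you prove the stronger statement that \emph{every} constant $C$ compatible with (1) must vanish. Your route is valid and the extra step---differentiating $C=\kappa\,e_{1}\wedge e_{2}$ and invoking that $c,d$ cannot vanish simultaneously because $b>0$ in (35)---is sound; it just does more work than the corollary actually requires, since once (36) reads $\Delta G=A(s)G$ the first-kind form is visible by inspection.
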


\begin{corollary}
\label{cor 4}Let $M$ be a spacelike rotational surface in Minkowski 4-space
given by the parametrization (5). If $M$ is extremal surface then $M$ has
pointwise 1-type Gauss map of the first kind.
\end{corollary}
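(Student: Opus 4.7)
The plan is to reduce Corollary~\ref{cor 4} directly to Theorem~\ref{teo 2}. Since $M$ is spacelike and extremal, it is in particular maximal, so the only thing left to verify in order to apply Theorem~\ref{teo 2} is that $M$ has flat normal bundle; once that is done, the conclusion (pointwise 1-type Gauss map of the first kind with $f = \Vert h \Vert^2$ and $C = 0$) is immediate.

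The key observation is that the boost invariant structure forces many components of the second fundamental form to vanish, so flatness of the normal bundle holds without any extremality hypothesis. More precisely, from the computation recorded in (7) we have
\[
h_{12}^{3} = h_{12}^{4} = h_{22}^{4} = 0, \qquad h_{11}^{3} = -c(s),\quad h_{22}^{3} = -b(s),\quad h_{11}^{4} = d(s).
\]
Substituting these into the general formula (3) for the normal curvature tensor,
\[
R^{D}(e_{1},e_{2};e_{3},e_{4}) = \varepsilon_{1}\bigl(h_{12}^{3}h_{11}^{4} - h_{11}^{3}h_{12}^{4}\bigr) + \varepsilon_{2}\bigl(h_{22}^{3}h_{21}^{4} - h_{21}^{3}h_{22}^{4}\bigr),
\]
every term contains one of the vanishing factors $h_{12}^{3}$, $h_{12}^{4}$ or $h_{22}^{4}$. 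Hence $R^{D} \equiv 0$, i.e.\ $M$ has flat normal bundle. Combined with $H = 0$, Theorem~\ref{teo 2} delivers the conclusion.

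As a sanity check, I would also verify the statement by direct computation from (16)--(19). The extremal condition $H = 0$ applied to (15) gives $b + c = 0$ and $d = 0$, whence $B(s) = b'(s) + c'(s) + a(s)d^{2}(s) = 0$ and $D(s) = d'(s) + a(s)d(s)(b(s)+c(s)) = 0$, while $A(s) = -b^{2}(s) - c^{2}(s) = -2b^{2}(s)$. Formula (16) therefore collapses to $\Delta G = -2b^{2}(s)\,(e_{1}\wedge e_{2}) = f\, G$ with $C = 0$, confirming pointwise 1-type of the first kind.

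There is no real obstacle here: the corollary is essentially a bookkeeping consequence of Theorem~\ref{teo 2}, and the only nontrivial point is the (trivial) verification that the boost invariant frame makes $R^{D}$ vanish automatically. One might note that the same mechanism underlies Corollary~\ref{cor 3}, where the marginally trapped case also produced $C = 0$ for identical structural reasons.
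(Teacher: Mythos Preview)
Your proof is correct and follows exactly the same route as the paper: verify from (3) and (7) that every boost invariant surface of the form (5) has flat normal bundle, then invoke Theorem~\ref{teo 2}. The additional direct check via (16)--(19) is a nice confirmation but is not present in the paper's argument.
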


\begin{proof}
We assume that $M$ is a spacelike rotational surface given by the
parametrization (5). In that case by using (3) and (7) we obtain that $M$
has flat normal bundle. Hence from Theorem ($\ref{teo 2}$) If $M$ is
extremal surface then $M$ has pointwise 1-type Gauss map of the first kind.
\end{proof}


\begin{thebibliography}{99}
\bibitem{ak1} Aksoyak F. and Yayl\i \ Y. Flat Rotational Surfaces with
pointwise 1-type Gauss map in $E^{4},$ arXiv:1302.2804 (under review).

\bibitem{ak2} Aksoyak F. and Yayl\i \ Y. General rotational surfaces with
pointwise 1-type Gauss map in pseudo-Euclidean space $E_{2}^{4},$
arXiv:1302.2910 (under review).

\bibitem{arslan1} Arslan K., Bayram, B.K., Bulca, B., Kim, Y.H., Murathan,
C. and \"{O}zt\"{u}rk, G. Rotational embeddings in $E^{4}$ with pointwise
1-type Gauss map, Turk. J. Math. 35, 493-499, 2011.

\bibitem{arslan2} Arslan K., Bayram B.K., Kim, Y.H., Murathan, C. and \"{O}zt%
\"{u}rk, G. Vranceanu surface in $E^{4}$ with pointwise 1-type Gauss map,
Indian J. Pure. Appl. Math. 42, 41-51, 2011.

\bibitem{arslan3} Arslan K., Bulca B., K\i l\i \c{c} B., Kim Y.H., Murathan
C. and \"{O}zt\"{u}rk G. Tensor Product Surfaces with Pointwise 1-Type Gauss
Map. Bull. Korean Math. Soc. 48, 601-609, 2011.

\bibitem{chen} Chen, B.Y. Choi, M. and Kim, Y.H. Surfaces of revolution with
pointwise 1-type Gauss map, J. Korean Math. 42, 447-455, 2005.

\bibitem{chen1} Chen, B.Y. and Piccinni, P. Submanifolds with Finite
Type-Gauss map, Bull. Austral. Math. Soc., 35, 161-186, 1987.

\bibitem{chen2} B.Y.Chen , Classification of marginally trapped Lorentzian
flat surfaces in $E_{2}^{4}$ and its application to biharmonic surfaces, J.
Math. Anal. Appl. 340, 861-875, 2008.

\bibitem{choi1} Choi, M. and Kim, Y.H. Characterization of the helicoid as
ruled surfaces with pointwise 1-type Gauss map, Bull. Korean Math. Soc. 38,
753-761, 2001.

\bibitem{choi2} Choi, M., Kim, D.S., Kim Y.H, Helicoidal surfaces with
pointwise 1-type Gauss map, J. Korean Math. Soc. 46, 215-223, 2009.

\bibitem{choi3} Choi, M. and Kim, Y.H. and Yoon, D.W. Classification of
ruled surfaces with pointwise 1-type Gauss map in Minkowski 3-space,
Taiwanese J. Math. 15, 1141-1161, 2011.

\bibitem{dursun1} Dursun, U. and Turgay, N.C., General rotational surfaces
in Euclidean space $E^{4}$ with pointwise 1-type Gauss map, Math. Commun.
17, 71-81, 2012.

\bibitem{dursun2} Dursun, U., Hypersurfaces with pointwise 1-type Gauss map
in Lorentz-Minkowski space, Proc. Est. Acad. Sci. 58, 146-161, 2009.

\bibitem{dursun3} Dursun, U., Co\c{s}kun, E. Flat surfaces in the Minkowski
space $E_{1}^{3}$ with pointwise 1-type Gauss map, Turk. J. Math. 35, 1-1,
2011.

\bibitem{dursun4} Dursun, U. and Arsan, G.G. Surfaces in the Euclidean space
$E^{4}$ with pointwise 1-type Gauss map, Hacet. J. Math. Stat. 40, 617-625,
2011.

\bibitem{dursun5} Dursun, U. and Turgay, N.C., On spacelike surfaces in
Minkowski 4-space with pointwise 1-type Gauss map of second type, Balkan J.
Geom. App., 17, 34-45, 2012.

\bibitem{dursun6} Dursun, U. and Turgay, N.C., Space-like surfaces in
Minkowski space $E_{1}^{4}$ with pointwise 1-type Gauss map. arXiv:1305.5419

\bibitem{ortega1} Haesen S. and Ortega M., Boost invariant marginally
trapped surfaces in Minkowski 4-space. Class. Quantum Grav. 24, 5441-5452,
2007.

\bibitem{ortega2} Haesen S. and Ortega M., Marginally trapped surfaces in
Minkowski 4-space invariant under a rotation subgroup of the Lorentz group.
Gen Relativ Gravit 41 (2009), 1819-1834.

\bibitem{kim2} Kim, Y.H. and Yoon, D.W. Ruled surfaces with pointwise 1-type
Gauss map, J. Geom. Phys. 34, 191-205, 2000

\bibitem{kim1} Kim, Y.H. and Yoon, D.W. Classification of rotation surfaces
in pseudo Euclidean space, J. Korean Math. 41, 379-396, 2004.

\bibitem{milo} Milousheva, V. Marginally trapped surfaces with pointwise
1-type Gauss map in Minkowski 4-space, Int. Journal of Geom., 2, 1, 34-43,
2013.

\bibitem{niang1} Niang, A. Rotation surfaces with 1-type Gauss map, Bull.
Korean Math. Soc. 42, 23-27, 2005

\bibitem{niang2} Niang, A. On rotation surfaces in the Minkowski 3-
dimensional space with pointwise 1-type Gauss map, J. Korean Math. Soc. 41,
1007-1021, 2004.

\bibitem{yoon1} Yoon, D.W. Rotation surfaces with finite type Gauss map in $%
E^{4},$ Indian J. Pure. Appl. Math. 32, 1803-1808, 2001.

\bibitem{yoon2} Yoon, D.W. Some properties of the Clifford torus as rotation
surface, Indian J. Pure. Appl. Math. 34, 907-915, 2003.
\end{thebibliography}
\end{document}